\def\MR#1{MR \href{http://www.ams.org/mathscinet-getitem?mr=#1}{#1}}
\newtheorem{lemma}{Lemma}
\newtheorem{theorem}{Theorem}
\newtheorem{concl}{Corollary}
\newtheorem{prop}{Proposition}
\newtheorem{note}{Remark}
\date{}
\author{E.Yu.~Lerner}
\title{Matroid variant of Matiyasevich formula and its application}
\begin{document}

\maketitle

\begin{abstract} 
In 1977, Yu.~V.~Matiyasevich proposed a formula expressing the chromatic polynomial of an arbitrary graph as a linear combination of flow polynomials of subgraphs of the original graph. In this paper, we prove that this representation is a particular case of one (easily verifiable) formula, namely, the representation of the characteristic polynomial of an arbitrary matroid as a linear combination of characteristic polynomials of dual matroids. As an application, we represent the flow polynomial of a complete graph with $n$ vertices as the sum of elementary terms with respect to all partitions of positive integer $n$. Since the growth rate of the number of all partitions is less than exponential, this technique allows us to evaluate the flow polynomial for values of $n\approx 50$. We also get an explicit expression for the characteristic polynomial of the matroid dual to the matroid of the projective geometry over a finite field. We prove, in particular, that major coefficients of all these polynomials coincide with the beginning of the row in the Pascal triangle, whose number equals the quantity of elements in the corresponding matroid.  At the end part of the paper, we consider one more approach, which allows us to obtain the same results of application of our main theoren by using properties of the Tutte polynomial and the classical Rota formula for coefficients of the characteristic polynomial of a matroid. In addition, we describe the connection between the matroid variant of the Matiyasevich formula and convolution formulas for Tutte polynomials.
\end{abstract}

\textbf{Mathematics Subject Classifications:} 05B35, 05C31, 05B25.

\textbf{Keywords:} dual matroid, characteristic polynomial, chromatic and flow polynomial, Ma\-ti\-ya\-sevich formula, $PG(n,q)$, Tutte polynomial, convolution formula.

\section{The structure of the paper, the main theorem}
\subsection{Motivation and the paper structure}

The main goal of this paper is to provide a very simple derivation of the formula that expresses the characteristic polynomial of an arbitrary matroid as a linear combination of characteristic polynomials of dual matroids and to consider applications of this formula. As particular cases, this representation includes the Matiyasevich
formula~\cite{mat}, which expresses the flow polynomial of an arbitrary graph as a combination of chromatic polynomials of its subgraphs, as well as its dual formula, which uses factor graphs. We can treat both of these formulas as corollaries of one well-known correlation between vacuum Feynman amplitudes over
a residue ring~\cite{tmp,lob}.
With the help of Feynman amplitudes one can also represent a flow polynomial
as the sum of Legendre symbols~\cite{ejc}. However, in this case, everything is much simpler. For deriving a generalized variant of the Matiyasevich formula in a more convenient matroid language, we need only elementary combinatorics.

In spite of the simplicity of derivation, the new formula appears to be useful for applications. In particular, the standard approach to calculating the flow polynomial of the complete graph $K_n$ does not allow one to express it immediately in a simple form. At the same time, the expression for the chromatic polynomial of the complete graph (and its factor graphs) is trivial. Therefore, for calculating the flow polynomial of the complete graph it is natural to use the approach that implies its expressing in terms of the chromatic one. As a result, one needs only to calculate the sum with respect to all partitions of a positive integer. According to the Hardy--Ramanujan formula, the number of such partitions does no grow so fast; this, in particular, allows one to easily calculate the flow polynomial of a complete graph for dimensions, to which the usual approach is inapplicable.

An analogous situation occurs in calculating the characteristic polynomial of the matroid which is dual
to $PG(n-1,q)$, i.\,e., the matroid of an $(n-1)$-dimensional projective space over a finite
field~$\mathbb F_q$.
The characteristic polynomial of the matroid $PG(n-1,q)$ has roots $1,q,q^2,\ldots,q^{n-1}$
and therefore the expression for it is evident, as distinct from the characteristic polynomial of the dual matroid.
In factorization of the matroid $PG(n-1,q)$ in a nontrivial case there occur matroids $PG(k-1,q)$,
$k\leq n$; their number equals the $q$-binomial
coefficient ${n \choose k}_q$.
As a result, we obtain an explicit expression
for the characteristic polynomial of the matroid dual to the matroid $PG(n-1,q)$.

We describe these results in the section~2, together with the Matiyasevich formula (deduced as a corollary of the main theorem) and the main results obtained in~\cite{lob}.
We observe that major coefficients of all characteristic polynomials of matroids considered in Section~2 (matroids dual to matroids of projective geometries, uniform matroids,
bond matroids of complete graphs, i.\,e., their flow polynomials) coincide (accurate to alternating signs) with beginnings of rows of the Pascal triangle.
In Section~3, we study the connection of the main theorem with convolution formulas
for Tutte polynomials obtained earlier in papers~\cite{reiner3} and~\cite{kung2}. In this section, we tried to thoroughly systemize results obtained for Tutte polynomials in many papers; their more concrete statements with certain additions have allowed us to obtain formulas given in sections~1 and~2. The author of this paper is most impressed by the paper~\cite{tutte1967} published by Tutte in~1967. As appeared, the formulas for Tutte polynomials of complete graphs given in the mentioned paper are the same as correlations for flow polynomials obtained by us in Section~2 of this paper.
Moreover, Tutte considers the exponential generating function for Tutte polynomials of complete graphs. This function allows one to obtain a short expression for the flow polynomial of a complete graph in terms of the differentiation operation.

Now we can easily implement two algorithms for calculating coefficients of the flow polynomial of the complete graph with $n$ vertices that correspond to the obtained formulas (see~\eqref{f1} and~\eqref{tutFlowDif}); these algorithms are asymptotically much faster than the standard procedure in Wolfram Mathematica.
The efficiency of the first algorithm slightly exceeds that of the second one; at the beginning of Section~3 we discuss this property in more detail.

At the end part of this paper, in Conclusion, we discuss references and further research directions.

\subsection{The main theorem}
Let us now state the main theorem. By introducing the elementary ``$\zeta$-fu\-nc\-tion’’ $\zeta_q(z)=\frac{1}{1-q^{-z}}$, $q\not\in \{0,1\}$, $z\in \mathbb{C}$, we state it in the most symmetric form. The function $\zeta_q$ satisfies the following trivial functional equations:
\begin{equation}
\zeta_q(z)=-q^z \zeta_q(-z),\qquad \zeta_q(z)-1=-\zeta_q(-z). \label{eq:funczetaq}
\end{equation}

Recall~\cite{oxley} that we understand a \textit{matroid}~$M=(E,r)$ as the set~$E$ (we consider only the case of finite sets) with the function $r\equiv r_M$, which is defined on all subsets $2^E$ of the set~$E$, takes on nonnegative integer values $\mathbb N_0$,
and possesses the following properties:\\
(1) If $A\subseteq E$, then $0 \leq r(A) \leq |A|$.\\
(2) If $A \subseteq B \subseteq  E$, then $r(A) \leq r(B )$.\\
(3) If $A$ and $B$ are subsets of $E$, then
$$
r(A \cup B ) + r(A \cap B ) \leq r(A) + r(B ).
$$
The function~$r$ is the \textit{rank function} of $M$, and $E$ is the \textit{ground set} of $M$.

\textit{The dual matroid} $M^*=(E,r^*)$ to the matroid $M=(E,r)$ is defined by the rank function $r^*$ which is defined on the same set as~$r$ and obeys the correlation
\begin{equation}
\label{eq:rstar}
r^*(A) = r(E - A) + |A| - r(E),\quad \mbox{for any $A\subseteq E$.}
\end{equation}

Let $A\subseteq E$. \textit{The restriction} of $M=(E,r)$ to $A$ is the matroid $M|A$ with the ground set $A$ and the rank function coinciding with $r$ on $2^A$. Let $C=E-A$.
\textit{The contraction} of $M$ onto $A$ is the matroid $M.A\equiv M/C$  with the ground set~$A$ and the rank function obeying the correlation
$$
r_{M.A}(B)= r_M (C \cup B) - r_M (C)\quad \mbox{for any $B\subseteq A$.}
$$
Evidently, if $B\subseteq A \subseteq E$, then 
$(M|B)|A=M|A$ and $(M.A).B=M.B$\,.In addition, as is known~\cite{oxley},
\begin{equation}
\label{MX}
(M.A)^*=M^* |A, \quad (M|A)^*=M^* .A\ .
\end{equation}

\textit{The characteristic polynomial} $\chi_M$ of the matroid~$M=(E,r)$ obeys the formula
\begin{equation}
\label{eq:chiM}
\chi_M(x) = \sum_{A\subseteq E} (-1)^{|A|} x^{r(E)-r(A)}.
\end{equation}

Let us give some more definitions, which will be necessary in the following sections of this paper. The \textit{flat} of a matroid is its inclusion-maximal subset $A$ of some fixed rank. This means that there exists no element $e\in E-A$ such that $r(A\cup\{e\})=r(A)$.
We understand a link as an element $e\in E$ such that $r(\{e\})=0$. If the rank of an analogous subset in the matroid~$M^*$ equals zero, then $e$ is called a colink.
One can easily make sure that
\begin{equation}
\label{eq:chiMwithLoops}
\mbox{if a matroid $M$ contains links, then $\chi_M(x)\equiv 0$.}
\end{equation}

\begin{theorem}
\label{main}
Let $M=(E,r)$ be an arbitrary matroid on the ground set~$E$ with the rank function~$r$. The following formulas are valid:
\label{ter:1}
\begin{eqnarray}
\label{one}
& \chi_{M^*}(q)\,\zeta_q(-1)^{|E|}=\sum\limits_{A\subseteq E} (-1)^{|E|-|A|}\,
\frac{\chi_{M|A}(q)}{q^{r(A)}}\, \zeta_q(1)^{|A|},&\\
\label{two}
&
\frac{\chi_{M^*}(q)}{q^{r^*(E)}}\zeta_q(1)^{|E|}=\sum\limits_{A\subseteq E} \zeta_q(-1)^{|E|-|A|}\,\chi_{M/A}(q).
&
\end{eqnarray}
\end{theorem}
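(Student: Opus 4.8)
The plan is to prove both identities by brute-force expansion of every characteristic polynomial via~\eqref{eq:chiM}, reducing all three of $\chi_{M^*}$, $\chi_{M|A}$, $\chi_{M/A}$ to sums over subsets weighted by the \emph{original} rank function~$r$, and then collapsing the resulting double sum on each right-hand side with a single application of the binomial identity $\sum_{S\subseteq T}x^{|S|}=(1+x)^{|T|}$. All the genuinely algebraic simplification is then carried out by the functional equations~\eqref{eq:funczetaq}; this is precisely why stating the theorem in terms of $\zeta_q$ is advantageous. As a first step for~\eqref{one}, I would rewrite the left-hand side: substituting the dual rank formula~\eqref{eq:rstar} into~\eqref{eq:chiM} and putting $C=E-B$ gives, since $r^*(E)-r^*(B)=|E-B|-r(E-B)$,
\[
\chi_{M^*}(q)=\sum_{C\subseteq E}(-1)^{|E|-|C|}\,q^{\,|C|-r(C)}.
\]
This expresses $\chi_{M^*}$ purely through~$r$ and is the target form I will aim to recover from the right-hand side.

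Next I would expand the right-hand side of~\eqref{one}. Because the rank function of $M|A$ agrees with~$r$ on $2^A$, we have $\chi_{M|A}(q)/q^{r(A)}=\sum_{B\subseteq A}(-1)^{|B|}q^{-r(B)}$, so the right-hand side is a double sum over pairs $B\subseteq A\subseteq E$. Interchanging the order of summation, I fix $B$ and write $A=B\cup S$ with $S\subseteq E-B$; the inner sum over $S$ then factors as a binomial sum equal to $(1-\zeta_q(1))^{|E|-|B|}$. Here the second equation in~\eqref{eq:funczetaq} gives $1-\zeta_q(1)=\zeta_q(-1)$, and after dividing through by $\zeta_q(-1)^{|E|}$ the first equation in~\eqref{eq:funczetaq} (in the form $\zeta_q(1)/\zeta_q(-1)=-q$) turns the weight $\zeta_q(1)^{|B|}\zeta_q(-1)^{-|B|}$ into $(-q)^{|B|}$. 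What remains is exactly the expansion of $\chi_{M^*}(q)$ displayed above, which completes~\eqref{one}.

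For~\eqref{two} the argument is parallel. Using the contraction rank formula one checks that $\chi_{M/A}(q)=\sum_{B\subseteq E-A}(-1)^{|B|}q^{\,r(E)-r(A\cup B)}$, so the right-hand side is a sum over pairs $A,B$ with $A\cup B$ a disjoint union. Setting $D=A\cup B$ and summing over $A\subseteq D$ first, the inner binomial sum evaluates (via $1-1/\zeta_q(-1)=q$, immediate from the definition of $\zeta_q$) to $\zeta_q(-1)^{|E|}(-1)^{|D|}q^{|D|}$, leaving $\zeta_q(-1)^{|E|}\sum_{D\subseteq E}(-1)^{|D|}q^{\,r(E)-r(D)+|D|}$. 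Comparing with the left-hand side, which from the expansion of $\chi_{M^*}$ above equals $(-1)^{|E|}q^{-|E|}\zeta_q(1)^{|E|}\sum_{D\subseteq E}(-1)^{|D|}q^{\,r(E)-r(D)+|D|}$, reduces the claim to the scalar identity $-q^{-1}\zeta_q(1)=\zeta_q(-1)$, which is again the first equation of~\eqref{eq:funczetaq} at $z=1$.

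I do not expect a serious obstacle: the only steps demanding care are the two changes of summation order and the accurate tracking of the rank exponents through~\eqref{eq:rstar} and the restriction/contraction rank formulas. The design of the statement, with its $\zeta_q$-weights and the symmetric functional equations~\eqref{eq:funczetaq}, is exactly what makes every collapse and every cancellation a one-line invocation of a known identity rather than a computation.
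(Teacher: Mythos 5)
Your proof is correct. For identity~\eqref{one} your argument is essentially the paper's own: expand $\chi_{M|A}(q)/q^{r(A)}=\sum_{B\subseteq A}(-1)^{|B|}q^{-r(B)}$, interchange the two summations, collapse the inner sum over supersets by the binomial theorem, and finish with the two functional equations~\eqref{eq:funczetaq}; your target form $\chi_{M^*}(q)=\sum_{C\subseteq E}(-1)^{|E|-|C|}q^{|C|-r(C)}$ is exactly the paper's expression~\eqref{eq:left} (up to the factor $\zeta_q(-1)^{|E|}$). Where you genuinely diverge is identity~\eqref{two}: the paper does not recompute it but deduces it from~\eqref{one} by observing that \eqref{one}, applied to every restriction $M|A$ via the duality relations~\eqref{MX}, forms a M\"obius-invertible (inclusion--exclusion) system, and then swaps $M\leftrightarrow M^*$; you instead run the same expansion--interchange--collapse computation a second time, directly on $\chi_{M/A}(q)=\sum_{B\subseteq E-A}(-1)^{|B|}q^{r(E)-r(A\cup B)}$ with the substitution $D=A\cup B$. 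Your route is more self-contained and elementary --- it never needs~\eqref{MX}, $(M^*)^*=M$, or the M\"obius inversion step --- at the cost of a second computation; the paper's route is shorter once~\eqref{one} is in hand and makes visible the structural fact that the two identities are M\"obius/duality transforms of one another. Both computations check out, including the scalar identities $1-\zeta_q(1)=\zeta_q(-1)$, $\zeta_q(1)/\zeta_q(-1)=-q$, and $1-1/\zeta_q(-1)=q$ that you invoke; as you note yourself, the only delicate point is carrying the sign $(-1)^{|E|-|B|}$ correctly through the interchange of summation, and your bookkeeping is consistent.
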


\begin{proof}
Let $A\subseteq E$. In accordance with formula~\eqref{MX} correlation~\eqref{one} means that
$$
\chi_{M^*.A}(q)\,\zeta_q(-1)^{|A|}=\sum\limits_{B\subseteq A} (-1)^{|A|-|B|}\,
\frac{\chi_{M|A}(q)}{q^{r(B)}}
\, \zeta_q(1)^{|B|}.
$$
Therefore, applying the M\"{o}bius inversion formula (in the case under consideration it coincides with the inclusion-exclusion formula) to identity~\eqref{one}, we can rewrite the latter as follows:
\begin{equation}
\label{twozeta}
\frac{\chi_M(q)}{q^{r(E)}}\zeta_q(1)^{|E|}=\sum_{A\subseteq E} \zeta_q(-1)^{|A|}\,\chi_{M^*.A}(q).
\end{equation}
Replacing $A$ with~$E-B$ and doing the matroid~$M$ in formula~\eqref{twozeta} with $M^*$ (recall that $\left(M^*\right)^*=M$) we get formula~\eqref{two}.
Therefore, it suffices to prove only correlation~\eqref{one}.

Performing substitution~\eqref{eq:rstar} and putting $A=E-B$ in the left-hand side of formula~\eqref{one}, we turn it into the form
\begin{equation}
\label{eq:left}
\sum_{A\subseteq E} (-1)^{|E|-|A|}\,q^{|A|-r(A)}  \zeta_q(-1)^{|E|}.
\end{equation}

We can write the right-hand side of formula~\eqref{one} as follows:
\begin{eqnarray}
& &\sum_{A\subseteq E} q^{-r(A)} \zeta_q(1)^{|A|} 
\sum_{B: A\subseteq B\subseteq E} \zeta_q(1)^{|B|-|A|} (-1)^{|E|-|B|}= \nonumber \\
&=&\sum_{A\subseteq E} q^{-r(A)}
(-\zeta_q(1))^{|A|}
(\zeta_q(1)-1)^{|E|-|A|}.
\label{eq:finalproof}
\end{eqnarray} 
Substituting correlations~\eqref{eq:funczetaq} in~\eqref{eq:finalproof}, we conclude that formula~\eqref{eq:finalproof} is equivalent to~\eqref{eq:left}.
\end{proof}

Note that the first of correlations~\eqref{eq:funczetaq} and the equality $r^*(E)=|E|-r(E)$ allow us to rewrite formula~\eqref{two} in a form, which is more convenient for calculations.

\begin{concl}
\label{concl1}
The following formula is valid:
\begin{equation}
\label{finaltwo}
\chi_{M^*}(x) =\frac{(-1)^{|E|}}{x^{r(E)}}\sum\limits_{A\subseteq E}  (1-x)^{|A|}\,\chi_{M/A}(x).
\end{equation}
\end{concl}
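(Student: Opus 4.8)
The plan is to obtain \eqref{finaltwo} directly from identity \eqref{two} of Theorem~\ref{main} by a purely algebraic rewriting, exactly as the remark preceding the corollary suggests; no new combinatorial input is needed. First I would record the two explicit values $\zeta_q(1)=1/(1-q^{-1})$ and $\zeta_q(-1)=1/(1-q)$ coming straight from the definition of $\zeta_q$, together with the special case $z=1$ of the first functional equation in \eqref{eq:funczetaq}, namely $\zeta_q(1)=-q\,\zeta_q(-1)$, or equivalently $\zeta_q(-1)=-q^{-1}\zeta_q(1)$. I would also use the identity $r^*(E)=|E|-r(E)$ (the case $A=E$ of \eqref{eq:rstar}, since $r(\emptyset)=0$) to rewrite the denominator $q^{r^*(E)}$ on the left-hand side of \eqref{two}.

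Next I would apply $\zeta_q(-1)=-q^{-1}\zeta_q(1)$ to replace every factor $\zeta_q(-1)^{|E|-|A|}$ on the right-hand side of \eqref{two}, turning the summand into $(-1)^{|E|-|A|}\,q^{-(|E|-|A|)}\,\zeta_q(1)^{|E|-|A|}\,\chi_{M/A}(q)$. Both sides then carry the common factor $\zeta_q(1)^{|E|}$, which I would cancel; on the right this leaves $\zeta_q(1)^{-|A|}$ in each summand. Writing $\zeta_q(1)^{-1}=1-q^{-1}=(q-1)/q$ converts $\zeta_q(1)^{-|A|}$ into $(q-1)^{|A|}q^{-|A|}$, after which all powers of $q$ in a given summand collapse to the summand-independent factor $q^{-|E|}$, which can be pulled outside the sum.

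The remaining work is bookkeeping of signs. In each summand I would rewrite $(-1)^{|E|-|A|}(q-1)^{|A|}=(-1)^{|E|}(1-q)^{|A|}$, so that the alternating sign becomes the constant $(-1)^{|E|}$ and the base becomes $1-q$, as required. Multiplying through by $q^{r^*(E)}=q^{|E|-r(E)}$ to clear the left-hand denominator, the accumulated powers of $q$ combine to $q^{-r(E)}$, and renaming the variable $q$ as $x$ yields \eqref{finaltwo} verbatim.

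The only step that demands care — and the sole place where an error could creep in — is the simultaneous tracking of the exponents of $q$ and of the alternating signs through the cancellation of $\zeta_q(1)^{|E|}$. I do not expect a genuine obstacle here, however, since the statement is merely a change of normalization of \eqref{two} rather than an independent result.
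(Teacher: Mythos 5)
Your proposal is correct and follows exactly the route the paper intends: the paper derives Corollary~\ref{concl1} from formula~\eqref{two} using only the first functional equation in~\eqref{eq:funczetaq} (at $z=1$, i.e.\ $\zeta_q(1)=-q\,\zeta_q(-1)$) and the identity $r^*(E)=|E|-r(E)$, which is precisely your computation. The sign and exponent bookkeeping in your argument checks out, so your write-up is simply a more detailed version of the one-line remark the paper offers in place of a proof.
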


Just this corollary we use in most applications.

\section{Applications of Theorem~\ref{main} and Corollary~\ref{concl1}}
\subsection{Applications in the case of the graphic matroid}

Let~$G=(V,E)$ be an undirected graph with the vertex set~$V$ and the edge set~$E$; $q\in \mathbb N$.
The number of ways of coloring vertices of the graph in~$q$ colors so as to make adjacent vertices of the graph have various colors is defined by the \textit{chromatic polynomial} of the graph $P_G(q)$.
The polynomial $P_G(q)$ is a multiple of~$q^{c(G)}$, where $c(G)$ is the number of connectivity components of the graph~$G$.
Its normalized version $P'_G(q)=P_G(q)/q^{c(G)}$ coincides with the characteristic polynomial of the so-called \textit{graphic} matroid $M_G=(E,r)$.
The rank function $M_G$ obeys the formula $r(A)=|V(A)|-c(A)$, where $|V(A)|$ and $c(A)$ are, correspondingly, the number of vertices and the number of connectivity components of the subgraph induced by the subset of edges~$A$, $A\subseteq E$.

The characteristic polynomial of the dual (\textit{bond}) matroid $M^*_G$ is the so-called \textit{flow polynomial} $F_G$~\cite{rota}. Let us fix some orientation of edges in~$E$ and denote by the symbol $\mathbb Z_q$ the additive group of integers modulo $q$.
In what follows, we understand a nonzero flow as a correspondence between each directed edge $e\in E$ and a nonzero element $k_e$ in $\mathbb Z_q$ such that for each vertex~$v\in V$ the sum $\sum k_e$ for all $e$ that enter in $v$ coincides with the analogous sum for all $e$ outgoing~$v$. The number of everywhere nonzero flows is independent of the orientation of edges; it equals the value of the flow polynomial $F_G(q)$.

In the case, when $M=M_G$, formula~\eqref{twozeta} takes the form
$$
\frac{P_G (q)}{q^{|V|}}\zeta _q (1)^{|E|}
=\sum_{A \subseteq E} \zeta _q(-1)^{|A|} F_{H(A)} (q),
$$
where $H(A)$ is the subgraph of the graph~$G$ induced by the subset~$A$ of edges of this graph.
The latter correlation coincides with the Matiyasevich formula~\cite{mat} accurate to denotations.

Correlation~\eqref{one} implies the formula that is inverse with respect to the above one, namely,
$$
F_G (q) {\zeta _q (-1)}^{|E|}  =
\sum_{A\subseteq E}  (q- 1)^{|E|-|A|}
\frac{P_{H(A)} (q)}{q^{|V(A)|}} \zeta_q (1)^{|A|},
$$
where $|V(A)|$ is the number of vertices of the graph~$H(A)$. See this representation of the flow polynomial as a linear combination of chromatic polynomials at the end part of the paper~\cite{lob} (see also formula~(2.5) in~\cite{kazakov}).

We can obtain a more interesting property, substituting the matroid $M=M_G$ in correlation~\eqref{finaltwo}.
We can express matroids $M/A$ in terms of factor graphs of the graph~$G$ with respect to subgraphs $H(A)$. Moreover, it is evident that if a factor graph $G/H$ contains links then $P_{G/H}(q)\equiv 0$. As a result, we obtain a slightly modified statement of theorem~2 in the paper~\cite{lob}.

\begin{theorem}
Assume that the set of vertices $V$ is divided into subsets $V_1,\ldots ,V_m$, where $m$ is an arbitrary number,
but $V_i$ are such that $m$ subgraphs formed by all edges in~$E$, whose both endpoints belong to one and the same subset $V_i$, are connected. Denote by~$A$ the set of all these edges and do by $H(A)$ the union of all subgraphs, correspondingly. Then the following formula is valid:
\begin{equation}
\label{eq:th2}
F_G(x)=\frac{(-1)^{|E|}}{x^{|V|}} \sum_{H(A)} (1-x)^{|A|} P_{G/H(A)}(x);
\end{equation}
here the sum is calculated over all subgraphs of~$H$ (all such partitions of the set~$V$).
\end{theorem}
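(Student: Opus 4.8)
The plan is to specialize Corollary~\ref{concl1} to the graphic matroid $M=M_G$ and then identify which terms in the resulting sum over all edge subsets actually survive. Substituting $M=M_G$ into~\eqref{finaltwo} and recalling the dictionary between matroid and graph invariants---namely $\chi_{M_G}=P'_G$, $\chi_{M_G^*}=F_G$, and $r(E)=|V|-c(G)$---the left-hand side becomes $F_G(x)$, while each summand carries a factor $\chi_{M_G/A}(x)$, the characteristic polynomial of the contraction $M_G/A$.

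The first step is to translate the contraction $M_G/A$ into graph language. Contracting the edge set $A$ in the graphic matroid corresponds to contracting the edges $A$ in $G$; the resulting graphic matroid is that of the factor graph obtained by identifying the vertices in each connected component of the subgraph $(V,A)$. Hence $\chi_{M_G/A}(x)=P'_{G/H(A)}(x)=P_{G/H(A)}(x)/x^{c(G/H(A))}$, where $H(A)$ denotes the union of these components. Since contracting connected subgraphs neither merges nor splits connected components of $G$, we have $c(G/H(A))=c(G)$, and so $\chi_{M_G/A}(x)=P_{G/H(A)}(x)/x^{c(G)}$. Combined with the factor $x^{-r(E)}=x^{-(|V|-c(G))}$ coming from~\eqref{finaltwo}, the two factors of $x^{c(G)}$ cancel and leave exactly the prefactor $(-1)^{|E|}/x^{|V|}$ appearing in~\eqref{eq:th2}.

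The decisive step is to see that most terms vanish. By~\eqref{eq:chiMwithLoops}, $\chi_{M_G/A}(x)\equiv 0$ precisely when the contraction carries a link, that is, when the factor graph $G/H(A)$ has a loop; such a loop arises exactly from an edge of $E-A$ whose two endpoints lie in the same connected component of $(V,A)$. Therefore a term is nonzero only when $A$ already contains \emph{every} edge of $G$ joining two vertices in a common component. For such an $A$, the components of $(V,A)$ form a partition $V_1,\ldots,V_m$ of $V$ into connected induced subgraphs, and $A$ is forced to be the full set of intra-block edges; conversely, each partition of $V$ into connected induced subgraphs yields exactly one such $A$. This sets up a bijection between the surviving subsets $A$ and the admissible partitions described in the statement.

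Assembling these observations, the sum over all $A\subseteq E$ in~\eqref{finaltwo} collapses to a sum over the admissible partitions, with $A$ the associated intra-block edge set and $H(A)$ the associated union of blocks, which is exactly~\eqref{eq:th2}. The step requiring genuine care---and hence the main obstacle---is the vanishing argument: one must argue cleanly that the loop-vanishing condition~\eqref{eq:chiMwithLoops} forces $A$ to coincide with the complete intra-block edge set, so that the partition alone, and not the particular choice of $A$, indexes the surviving terms.
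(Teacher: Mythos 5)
Your proposal is correct and follows the same route the paper takes: specialize Corollary~\ref{concl1} to $M=M_G$, identify $M_G/A$ with the graphic matroid of the factor graph $G/H(A)$, and use the link-vanishing property~\eqref{eq:chiMwithLoops} to collapse the sum to the admissible partitions. In fact you supply more detail than the paper does (the normalization bookkeeping $c(G/H(A))=c(G)$ and the bijection between surviving sets $A$ and partitions into connected blocks), since the paper only sketches this derivation in a few sentences.
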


Here, similarly to formula~\eqref{finaltwo} given above, and in what follows, we use the ``real'' variable $x$ instead of~$q$ (we will need the symbol $q$ later for the standard denotation of the number of elements in a finite field, though earlier it was convenient for us to use this variable as a subscript for the ``zeta-function''). Let us now apply formula~\eqref{eq:th2} in the case, when the graph~$G$ is $K_n$, i.\,e., a complete graph with $n$ vertices. Let us first recall some assertions of the partition theory~\cite{Andrews}.

A \textit{partition} $\lambda$ of a positive integer~$n$ is its representation as the sum of positive integer terms (two sums, which differ in the order of addends, are considered as equal ones). The total number of partitions $p(n)$ satisfies the well-known formula derived by Hardy and Ramanujan (and refined by Rademacher). The asymptotics for $p(n)$ is subexponential, in particular, $p(50)$ is only $204\,226$. At the same time, it is impossible to calculate (using a computer) $F_{K_n}(x)$ with $n=50$ with the help of standard techniques (which are not specific for complete graphs), because the computation time grows faster than $\exp(n)$. Formula~\eqref{eq:th2} allows us to reduce the calculation of $F_{K_n}(x)$ to summing $p(n)$ terms and to solve the problem of calculating $F_{K_n}(x)$ for $n\sim 50$.

We can represent the partition $\lambda$ of the number~$n$ in the form $(\lambda_1,\ldots,\lambda_m)$, where
$$\lambda_1\leq \lambda_2\leq\ldots\leq \lambda_m, \quad \sum\nolimits_{i=1}^m \lambda_i=n,$$
or in the form $(1^{n_1}2^{n_2}3^{n_3},\ldots)$, where $n_i$ is the number of values $i$ among the collection $(\lambda_1,\ldots,\lambda_m)$, correspondingly, $\sum_{i} i\, n_i=n$.
The number of partition elements $m$ is called the \textit{length of the partition}; we denote it as $\ell(\lambda)$. Let
\begin{eqnarray*}
\lambda!&=&\lambda_1!\lambda_2!\lambda_3!\ldots=1!^{n_1}2!^{n_2}3!^{n_3}\ldots,\\
|\lambda|!&=&n_1!n_2!n_3!\ldots,\qquad f(\lambda)=\lambda!\times |\lambda|! .
\end{eqnarray*}
The number $f(\lambda)$ is called the 
\textit{Fa\'a di Bruno coefficient} 
of the partition $\lambda$. We need the following assertion (see~\cite[theorem~13.2]{Andrews}) (one can prove it rather easily).

\begin{lemma}
\label{flambda}
The number of ways for dividing a set of $n$ distinct elements into parts, whose powers are defined by the partition $\lambda$, equals $f(\lambda)$.
\end{lemma}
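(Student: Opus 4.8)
The plan is to count the set partitions of an $n$-element set whose blocks have the prescribed sizes by a standard overcounting argument that starts from linear orderings of the ground set. First I would write the $n$ distinct elements in a row, giving $n!$ sequences. Fixing once and for all the ordering $\lambda_1\le\lambda_2\le\cdots\le\lambda_m$ of the parts (with $m=\ell(\lambda)$), I cut each sequence into consecutive runs of lengths $\lambda_1,\ldots,\lambda_m$ and take as the blocks of the partition the underlying sets of these runs. Every partition whose block sizes are given by $\lambda$ evidently arises from at least one sequence, so this assignment is a surjection from the $n!$ sequences onto the set of desired partitions; it then remains only to determine how many sequences produce one and the same partition.

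Two sources of redundancy must be identified. First, permuting the elements inside a single run leaves the block it produces unchanged; ranging over all runs this is the group $\prod_{j}S_{\lambda_j}$ of order $\prod_j\lambda_j!=\lambda!$. Second, two runs of equal length may be interchanged without altering the unordered partition; since there are $n_i$ runs of each length $i$, this contributes the group $\prod_i S_{n_i}$ of order $\prod_i n_i!=|\lambda|!$. The run-permutations normalize the within-run permutations, so the two families together generate a wreath-type subgroup $G$ of order $\prod_i\bigl((i!)^{n_i}\,n_i!\bigr)=\lambda!\cdot|\lambda|!$, and $G$ acts freely, hence simply transitively, on the set of sequences yielding any fixed partition. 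Consequently every fibre of the surjection has exactly $\lambda!\cdot|\lambda|!$ elements, and the number of distinct partitions equals
\[
\frac{n!}{\lambda!\,|\lambda|!},
\]
which is precisely the Fa\'a di Bruno coefficient attached to $\lambda$.

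The argument is entirely elementary, so there is no deep obstacle; the one point that genuinely needs care is the bookkeeping of the symmetry factor, namely verifying that the total redundancy is exactly $\lambda!\cdot|\lambda|!$ and not something smaller. Here one must not claim that the within-run and run-swapping permutations commute (in general they do not); the correct statement is that the run-swaps normalize the within-run group, producing a wreath product of the stated order with trivial stabilizers, so that each fibre really has full size. As a sanity check I would test $\lambda=(1,2)$ with $n=3$, where the formula predicts $3!/(1!\,2!\cdot 1!\,1!)=3$ partitions into a singleton and a pair, matching direct enumeration. An alternative derivation that sidesteps the explicit group action selects the blocks size-class by size-class via multinomial coefficients and then divides by $n_i!$ within each class to forget the ordering of equal blocks; it yields the same quotient and can serve as an independent confirmation.
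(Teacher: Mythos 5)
Your argument is correct and complete: the overcounting map from the $n!$ linear orderings onto the set partitions with block sizes $\lambda$, together with the observation that the fibres are orbits of the wreath-product group $\prod_i\bigl(S_i\wr S_{n_i}\bigr)$ acting freely on sequences, gives exactly $n!/(\lambda!\,|\lambda|!)$ partitions; your care about the non-commuting but normalizing run-swaps is the right point to worry about. The paper itself supplies no proof at all --- it only cites Andrews, Theorem~13.2, and remarks that the statement is easy --- so there is nothing to compare against; your route is the standard one and would serve as the omitted proof. One substantive remark: the quantity you obtain, $n!/(\lambda!\,|\lambda|!)$, does \emph{not} agree with the paper's displayed definition $f(\lambda)=\lambda!\times|\lambda|!$, which is evidently a typographical slip missing the numerator $n!$. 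Your value is the genuine Fa\'a di Bruno coefficient and is the one the paper actually uses downstream: with $f(\lambda)=n!/(\lambda!\,|\lambda|!)$ the formula of Theorem~\ref{Ffullgraph} reproduces the stated $F_{K_5}(x)=x^6-10x^5+\cdots$ (e.g.\ the partition $\lambda=(5)$ must contribute with coefficient $1$, not $120$). So your proof establishes the statement as intended, and in doing so exposes the misprint in the definition of $f(\lambda)$.
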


\begin{theorem}
\label{Ffullgraph}
Let $s(\lambda)=\sum_{i=1}^{\ell(\lambda)} \lambda_i(\lambda_i-1)/2$.
The following formula is valid:
\begin{equation}
\label{f1}
 F_{K_n}(x) = \frac{(-1)^\frac{n(n-1)}{2}}{x^n} \sum_{\lambda} f(\lambda)
 (1-x)^{s(\lambda)} x(x-1)\ldots(x-\ell(\lambda)+1);
\end{equation}
here the sum is calculated over all partitions $\lambda$ of the number~$n$.
\end{theorem}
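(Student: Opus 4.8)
The plan is to specialize the formula~\eqref{eq:th2} established above to the case $G=K_n$ and then reorganize the resulting sum according to partition type. The key simplification is that $K_n$ is complete, so every nonempty subset of vertices induces a complete, hence connected, subgraph. Therefore the connectivity hypothesis on the blocks $V_1,\dots,V_m$ in the preceding theorem is automatically satisfied, and the sum in~\eqref{eq:th2} ranges over \emph{all} partitions of the labelled vertex set $\{1,\dots,n\}$ into nonempty blocks, with no restriction whatsoever.

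Fix one such set partition and let its blocks have sizes $\lambda_1,\dots,\lambda_m$, so that $\lambda=(\lambda_1,\dots,\lambda_m)$ is a partition of $n$ of length $m=\ell(\lambda)$. I would evaluate the three ingredients of~\eqref{eq:th2} in turn. The prefactor is immediate: $|E|=\binom{n}{2}=n(n-1)/2$ and $|V|=n$ produce exactly $(-1)^{n(n-1)/2}/x^{\,n}$. The edge set $A$ consists precisely of the edges lying inside a single block, whence
$$|A|=\sum_{i=1}^{m}\binom{\lambda_i}{2}=\sum_{i=1}^{m}\frac{\lambda_i(\lambda_i-1)}{2}=s(\lambda).$$

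The step that deserves the most care is the chromatic factor $P_{G/H(A)}(x)$. Contracting each connected block to a point yields a multigraph on the $m$ blocks in which distinct blocks $V_i$ and $V_j$ are joined by $\lambda_i\lambda_j$ parallel edges; crucially, no loops appear, since every intra-block edge belongs to $A$ and is contracted away, while the surviving inter-block edges join distinct contracted vertices. As parallel edges impose the same constraint on a proper colouring as a single edge, the chromatic polynomial of this multigraph coincides with that of the simple complete graph $K_m$, i.e.
$$P_{G/H(A)}(x)=P_{K_m}(x)=x(x-1)\cdots(x-m+1)=x(x-1)\cdots\bigl(x-\ell(\lambda)+1\bigr).$$

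It remains to collapse the sum over set partitions into a sum over partition types. Since both $|A|=s(\lambda)$ and the chromatic factor depend only on the multiset of block sizes, all set partitions of a common type $\lambda$ contribute identical summands. By Lemma~\ref{flambda} there are exactly $f(\lambda)$ set partitions of $\{1,\dots,n\}$ of type $\lambda$, so grouping them multiplies each distinct summand by $f(\lambda)$; summing over all $\lambda\vdash n$ then yields~\eqref{f1}. I expect no genuine obstacle here, the argument being substitution into~\eqref{eq:th2} followed by this regrouping; the only point requiring vigilance is the verification that contraction creates no loops, so that every partition type truly contributes rather than being annihilated by~\eqref{eq:chiMwithLoops}.
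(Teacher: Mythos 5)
Your proposal is correct and follows essentially the same route as the paper's own proof: specialize formula~\eqref{eq:th2} to $G=K_n$, note that every block induces a connected (complete) subgraph so all set partitions contribute, compute $|A|=s(\lambda)$ and $P_{G/H(A)}=P_{K_{\ell(\lambda)}}$, and regroup by partition type using Lemma~\ref{flambda}. Your additional remarks on the absence of loops after contraction and on parallel edges make explicit points the paper leaves implicit, but the argument is the same.
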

\begin{proof}
Let us apply formula~\eqref{eq:th2} for $G=K_n$. All terms in the right-hand side for various representations of $V$ as the union of disjoint parts $V_1,\ldots , V_m$ are the same, provided that their powers form one and the same partition $\lambda$ of the number~$n$. By Lemma~\ref{flambda} the quantity of such representations equals $f(\lambda)$. The number of edges in the subgraph~$H$ such that each their connectivity component represents a complete subgraph with $\lambda_i$ vertices obeys the evident formula
$$\sum\nolimits_{i=1}^{\ell(\lambda)}\frac{\lambda_i(\lambda_i-1)}{2};\quad \mbox{and}\ P_{K_m}(x)=x(x-1)\ldots(x-m+1).$$
\end{proof}
Formula~\eqref{f1} allows us to easily calculate, for example, the value $F_{K_{50}}(x)$.

\begin{concl}
\label{concl2}
Let $n>2$, $N=n(n-1)/2$ and $N'=(n-1)(n-2)/2$. The major $n-2$ powers of the flow polynomial $F_{K_n}(x)$ take the form
$$
x^{N'}-N x^{N'-1}+ {N \choose 2} x^{N'-2} -\ldots+(-1)^{n-3} {N \choose n-2} x^{N'- n+ 3}.
$$
\end{concl}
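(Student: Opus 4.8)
The plan is to read the leading coefficients straight off formula~\eqref{f1}, after proving that a single partition accounts for all of them. In the numerator of~\eqref{f1} the summand attached to a partition $\lambda$ is a polynomial of degree $g(\lambda):=s(\lambda)+\ell(\lambda)$, since $(1-x)^{s(\lambda)}$ contributes degree $s(\lambda)$ and the falling factorial $x(x-1)\cdots(x-\ell(\lambda)+1)$ contributes degree $\ell(\lambda)$. Because $N'=N-n+1$, division by $x^{n}$ shifts degrees so that the coefficient of $x^{N'-k}$ in $F_{K_n}$ equals $(-1)^{N}$ times the coefficient of $x^{N+1-k}$ in the numerator. Hence I only need to understand the numerator in its top $n-2$ degrees, down to $x^{N-n+4}$.

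The decisive step is the degree estimate $g(\lambda)\le N-n+3$ for every partition $\lambda\neq(n)$, together with $g((n))=N+1$. The clean way to see it is the identity
\[
g((n))-g(\lambda)=\sum_{i<j}\lambda_i\lambda_j-\bigl(\ell(\lambda)-1\bigr),
\]
which follows from $\binom{n}{2}-\sum_i\binom{\lambda_i}{2}=\sum_{i<j}\lambda_i\lambda_j$ and $\ell((n))=1$. It then remains to verify the elementary inequality $\sum_{i<j}\lambda_i\lambda_j\ge n+\ell(\lambda)-3$ whenever $\lambda$ has at least two parts; this is where the real work sits, and the bound is sharp, with equality at $\lambda=(1,n-1)$. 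Pinning down the width of the resulting gap — that no partition other than $(n)$ comes within $n-3$ of the top degree — is the heart of the argument, since it is exactly this gap that protects the leading $n-2$ coefficients.

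Granting the estimate, only $\lambda=(n)$ reaches numerator-degrees $\ge N-n+4$. Its contribution is $x(1-x)^{N}$ (there is a single way to place all $n$ vertices in one block, and $s((n))=N$, $\ell((n))=1$), so
\[
F_{K_n}(x)=\frac{(-1)^{N}}{x^{n}}\,x(1-x)^{N}+R(x)=\frac{(x-1)^{N}}{x^{n-1}}+R(x),
\]
where $R$ gathers the remaining partitions and, by the estimate, is supported only on powers $x^{N'-k}$ with $k\ge n-2$. Expanding $(x-1)^{N}=\sum_{k}(-1)^{k}\binom{N}{k}x^{N-k}$ and dividing by $x^{n-1}$ yields the coefficient $(-1)^{k}\binom{N}{k}$ at $x^{N'-k}$. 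Since $R$ cannot reach above degree $N'-n+2$, the top $n-2$ coefficients of $F_{K_n}$ are exactly $(-1)^{k}\binom{N}{k}$ for $0\le k\le n-3$, i.e.\ the list $1,-N,\binom{N}{2},\dots$ asserted in the statement.

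Finally, as a conceptual cross-check one may argue matroid-theoretically: $F_{K_n}=\chi_{M_{K_n}^{*}}$, and the circuits of the bond matroid $M_{K_n}^{*}$ are the minimal edge cuts of $K_n$, the smallest of which has size equal to the edge connectivity $n-1$. A standard no-broken-circuit count then forces the coefficient of $x^{N'-k}$ to equal $(-1)^{k}\binom{N}{k}$ for every $k\le n-3$, with the first deviation appearing at $k=n-2$; this both reproduces the binomial pattern and explains why the clean range stops after $n-2$ terms.
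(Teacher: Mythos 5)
Your argument follows essentially the same route as the paper's own proof: both isolate the partition $\lambda=(n)$, observe that its summand in~\eqref{f1} contributes $(x-1)^N/x^{n-1}$, and show that every other partition only affects degrees at least $n-2$ below the top by controlling $g(\lambda)=s(\lambda)+\ell(\lambda)$. Your identity $g((n))-g(\lambda)=\sum_{i<j}\lambda_i\lambda_j-(\ell(\lambda)-1)$ is correct and is a clean reformulation of the paper's comparison of $g$ across partitions; your closing ``cross-check'' via minimum cuts and broken circuits is in fact the paper's own second derivation of this corollary (last subsection of Section~3, via Proposition~\ref{prop2}).

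The one genuine defect is that you declare the inequality $\sum_{i<j}\lambda_i\lambda_j\ge n+\ell(\lambda)-3$ for $\ell(\lambda)\ge 2$ to be ``where the real work sits'' and then proceed by ``granting the estimate'': the pivotal step is asserted, not proved. It is, fortunately, a short induction on $m=\ell(\lambda)$. For $m=2$ the claim reads $\lambda_1\lambda_2\ge\lambda_1+\lambda_2-1$, i.e.\ $(\lambda_1-1)(\lambda_2-1)\ge 0$, with equality exactly when some part equals $1$ --- which gives your sharpness at $(1,n-1)$. For $m\ge 3$, merge two parts $\lambda_a$ and $\lambda_b$ into a single part: the sum $\sum_{i<j}\lambda_i\lambda_j$ decreases by exactly $\lambda_a\lambda_b\ge 1$, while the target $n+m-3$ decreases by exactly $1$, so the case $m$ follows from the case $m-1$. (This merging step is precisely the paper's observation that $g$ does not decrease when two parts of the partition are merged.) With this two-line argument supplied, your proof is complete and correct, and coincides in substance with the one in the paper.
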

\begin{proof}
Consider the function $g(\lambda)=s(\lambda)+\ell(\lambda)$, where $\lambda$ is a partition of a fixed number. With $\ell(\lambda)=2$ the value of the function~$g$ decreases as so does the value $|\lambda_1-\lambda_2|$. Moreover, with $\ell(\lambda)=1$ its value is greater than that with $\ell(\lambda)=2$.
In view of this fact we conclude that with $\ell(\lambda)\not\in \{1,2\}$ we can increase the value of $g(\lambda)$ by ``merging together’’ two elements of the partition.
As a result, the largest value of $g(\lambda)$ is attained with $\lambda'$ such that $\ell(\lambda')=1$, while the second largest one is attained with 
$\lambda''=(1,n-1)$, $\ell(\lambda'')=2$.

By Theorem~\ref{Ffullgraph} the value $(g(\lambda)-n)$ is the power of the term that corresponds to the partition~$\lambda$ in sum~\eqref{f1}. Using calculations performed in the previous paragraph, we conclude that coefficients of $F_{K_n}(x)$ coincide with coefficients at major powers $(x-1)^N$, provided that the number of major powers equals
$$
g(\lambda')-g(\lambda'')=(N+1)-(N'+2)=n-2.
$$
Here the power of the polynomial $F_{K_n}(x)$ equals $g(\lambda')-n=N+1-n=N'$.
\end{proof}

As an illustration, let us consider the result of the calculation of the flow polynomial of a complete graph with 5 vertices:
$$
F_{K_5}(x)=\mathbf{1}\, x^6-\mathbf{10}\, x^5+\mathbf{45}\, x^4-115 x^3+175 x^2-147 x+51.
$$

\subsection{Applications in the non-graphic case}
Let us now discuss the application of the main theorem in cases when the matroid~$M$ is not, generally speaking, a graphic one.
Recall that~(\cite{oxley}) the uniform matroid $U_{m,n}$, $m\leq n$,
$m,n\in {\mathbb N}_0$, is defined on the ground set $E$, $|E|=n$, by the rank function $r$ as follows:
$$
\mbox{with $A\in 2^E$ we get the equality $r(A)=\min\{m,|A|\}$}.
$$
Note that $U^*_{m,n}=U_{n-m,n}$. One can easily verify that for $A\subseteq E$, $|A|=k$,
\begin{equation}
\label{umnk}
U_{m,n}|A=U_{\min\{m,k\},k}\quad\text{and}\quad U_{m,n}.A=U_{\max\{0,m-n+k\},k}.
\end{equation}
We directly calculate the characteristic polynomial~\eqref{eq:chiM} of the matroid $U_{m,n}$: $\chi_{U_{0,0}}=1$, in the rest cases (with all other values $m\leq n$) the following formula is valid:
$$
\chi_{U_{m,n}}(x)=\sum_{i=0}^{m-1} (-1)^i {n \choose i} (x^{m-i} - 1).
$$

In this case, sums over subsets of $E$ of power~$k$ in formulas in Theorem~\ref{main} and Corollary~\ref{concl1} give ${n \choose k}$ equal terms. As a result, we get some identities for binomial coefficients. In the next section, we consider a more elegant and at the same time elementary generalization of these identities (see Remark~\ref{concl3}).

Let us thoroughly study the application of formula~\eqref{finaltwo} in the case of finite projective geometries, i.\,e., projective spaces over a finite field~${\mathbb F}_q$. Recall that elements of the ground set~$E$ of the matroid~$PG(n-1,q)$ are all distinct points of the projective space of dimension~$n-1$ over the field~${\mathbb F}_q$~(see details in~\cite[Ch.~6]{oxley}). The rank of the subset~$A$ of the set~$E$ is the dimension of the linear space spanned by~$A$, in particular $r(E)=n$.
One can easily make sure that $|E|=\frac{q^n-1}{q-1}={n \choose 1}_q$; here the Gaussian binomial coefficient ${n \choose k}_q$, $n,k\in \mathbb N_0$, obeys the formula
$$
{n \choose k}_q=
\frac{(q^n-1)(q^{n-1}-1)\cdots (q^{n-k+1}-1)}{(q^k-1)(q^{k-1}-1)\cdots (q-1)}.
$$
The following assertion is a generalization of the remark about the number of elements in the ground set~(\cite[Proposition 6.1.4]{oxley}):
\begin{equation}
\label{eq:PGrankk}
\mbox{The number of rank-$k$ flats in $PG(n-1,q)$ equals ${n \choose k}_q$.}
\end{equation}
In addition, it is well known that (see~\cite[Proposition 7.5.3]{zasl} and references to fundamental works by Richard Stanley mentioned there) the characteristic polynomial of the matroid~$PG(n-1,q)$ takes the form
\begin{equation}
\label{eq:chiPG}
\chi_{PG(n-1,q)}(x)=(x-1)(x-q)(x-q^2)\cdots (x-q^{n-1}).
\end{equation}

The factor space of a linear space of dimension~$n$ with respect to its subspace of dimension~$k$, $k<n$, is its subspace of dimension~$n-k$ orthogonal to the subspace, which we use for the factorization.
In terms of matroids, this means that if $A, A\subseteq E$, is the rank-$k$ flat of $PG(n-1,q)$, then the matroid $PG(n-1,q)/A$ is isomorphic to $PG(n-k-1,q)$.
But if $A$ is not a flat of $PG(n-1,q)$, then the matroid $PG(n-1,q)/A$ contains links.
Therefore, by using formulas~\eqref{eq:chiMwithLoops}, \eqref{eq:PGrankk}, and~\eqref{eq:chiPG}, we conclude that Corollary~\ref{concl1} in the case under consideration can be formulated as follows:
\begin{theorem}
\label{chiPGdual}
The characteristic polynomial of the matroid dual to $PG(n-1,q)$ obeys the formula
$$
\chi_{PG(n-1,q)^*}(x) =\frac{(-1)^{\frac{q^n-1}{q-1}}}{x^n}\sum_{k=0}^n {n \choose k}_q\ (1-x)^{\frac{q^k-1}{q-1}}\,\prod_{i=0}^{n-k-1}(x-q^i).
$$
\end{theorem}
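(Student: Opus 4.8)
The plan is to apply Corollary~\ref{concl1}, i.e.\ formula~\eqref{finaltwo}, directly to the matroid $M=PG(n-1,q)$ and then to collapse the sum over all subsets $A\subseteq E$ to a sum over flats, grouped by rank. First I would substitute $M=PG(n-1,q)$ into~\eqref{finaltwo}. Since $r(E)=n$ and $|E|=\frac{q^n-1}{q-1}$, the prefactor $\frac{(-1)^{|E|}}{x^{r(E)}}$ becomes exactly $\frac{(-1)^{(q^n-1)/(q-1)}}{x^n}$, which already matches the claimed sign and power of $x$.

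The key reduction is that most terms in the sum vanish. By the structural observation recalled just before the statement, if $A\subseteq E$ is not a flat of $PG(n-1,q)$, then the contraction $PG(n-1,q)/A$ contains a link, whence $\chi_{PG(n-1,q)/A}(x)\equiv 0$ by~\eqref{eq:chiMwithLoops}. Thus only flats survive, and I would reorganize the remaining sum according to the rank $k$ of the flat $A$, with $k$ ranging from $0$ to $n$.

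For a fixed rank $k$ I would use three facts. By~\eqref{eq:PGrankk} there are exactly ${n \choose k}_q$ rank-$k$ flats. Each such flat is itself a $(k-1)$-dimensional projective subspace, hence has $|A|={k \choose 1}_q=\frac{q^k-1}{q-1}$ points; this makes the factor $(1-x)^{|A|}$ constant across all rank-$k$ flats, so they may be grouped. Finally, by the contraction isomorphism $PG(n-1,q)/A\cong PG(n-k-1,q)$ together with~\eqref{eq:chiPG}, every rank-$k$ flat contributes the same characteristic polynomial $\chi_{PG(n-k-1,q)}(x)=\prod_{i=0}^{n-k-1}(x-q^i)$. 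Assembling these, the total contribution of rank-$k$ flats is ${n \choose k}_q(1-x)^{(q^k-1)/(q-1)}\prod_{i=0}^{n-k-1}(x-q^i)$, and summing over $k$ yields the stated formula.

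The computation presents no genuine obstacle, since the excerpt supplies the hard structural inputs; the only point demanding care is the uniformity of $|A|$ over all rank-$k$ flats, which is precisely what legitimizes pulling $(1-x)^{|A|}$ out of the grouped sum. I would also verify the two boundary cases separately: $k=0$ corresponds to the single empty flat (with $|A|=0$ and $PG(n-1,q)/\emptyset=PG(n-1,q)$), while $k=n$ corresponds to $A=E$, where the empty product $\prod_{i=0}^{-1}(x-q^i)$ is read as $1$ and $\chi_{PG(n-1,q)/E}(x)=1$; both agree with the $\frac{q^k-1}{q-1}$ and $\prod_{i=0}^{n-k-1}$ conventions appearing in the displayed sum.
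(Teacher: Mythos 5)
Your proposal is correct and follows essentially the same route as the paper: the paper likewise specializes Corollary~\ref{concl1} to $M=PG(n-1,q)$, discards non-flats via~\eqref{eq:chiMwithLoops}, and groups the surviving flats by rank using~\eqref{eq:PGrankk}, the contraction isomorphism $PG(n-1,q)/A\cong PG(n-k-1,q)$, and~\eqref{eq:chiPG}. Your explicit attention to the uniformity of $|A|$ over rank-$k$ flats and to the boundary cases $k=0,n$ is a sound elaboration of details the paper leaves implicit.
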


Here the empty product equals one. Correspondingly, the latter term in the sum equals $(1-x)^{\frac{q^n-1}{q-1}}$, while the second largest one is the polynomial of~$x$ of the power $\frac{q^{n-1}-1}{q-1}+1$. Hence, we get the following analog of Corollary~\ref{concl2}:
\begin{concl}
\label{concl5}
Let $N=\frac{q^n-1}{q-1}$, $N'=N-n$, $k=q^{n-1}-1$. 
The $k$ major powers of the polynomial $\chi_{PG(n-1,q)^*}(x)$ take the form
$$
x^{N'}-N x^{N'-1}+ {N \choose 2} x^{N'-2}  
-\ldots+(-1)^{k-1} {N \choose k} x^{N'-k+ 1}.
$$
\end{concl}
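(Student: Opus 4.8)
The plan is to mirror the degree-counting argument used in the proof of Corollary~\ref{concl2}, now applied to the closed form supplied by Theorem~\ref{chiPGdual}. I would write the right-hand side of that theorem as $\frac{(-1)^N}{x^n}\,S(x)$, where $S(x)=\sum_{j=0}^{n}{n\choose j}_q(1-x)^{(q^j-1)/(q-1)}\prod_{i=0}^{n-j-1}(x-q^i)$. Since the claim concerns only the top $k$ coefficients of $\chi_{PG(n-1,q)^*}$, it suffices to show that in the corresponding range of high degrees $S(x)$ is governed by a single summand, and then to read off its coefficients.

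First I would compute the degree $d_j$ of the $j$-th summand of $S$. As $(1-x)^{(q^j-1)/(q-1)}$ contributes degree $\frac{q^j-1}{q-1}$ and the product $\prod_{i=0}^{n-j-1}(x-q^i)$ contributes degree $n-j$, we get $d_j=\frac{q^j-1}{q-1}+(n-j)$. A one-line check gives $d_{j+1}-d_j=q^j-1\ge 0$, so the degrees are non-decreasing and strictly increasing for $j\ge 1$; in particular the unique top-degree summand is the $j=n$ term, which equals $(1-x)^N$ (here ${n\choose n}_q=1$ and the product is empty), of degree $d_n=N$. The decisive computation is the gap to the next summand: $d_n-d_{n-1}=\frac{q^n-q^{n-1}}{q-1}-1=q^{n-1}-1=k$.

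Consequently every summand with $j\le n-1$ has degree at most $d_{n-1}=N-k$, so in the degree window from $N$ down to $N-k+1$ only the term $(1-x)^N$ is present. Hence the top $k$ coefficients of $S(x)$ coincide with those of $(1-x)^N=\sum_i(-1)^i{N\choose i}x^i$. Extracting the coefficient of $x^{N-m}$ for $0\le m\le k-1$ gives $(-1)^{N-m}{N\choose m}$, and multiplying by the prefactor $(-1)^N/x^n$ shifts the exponent to $N-m-n=N'-m$ and turns the coefficient into $(-1)^{2N-m}{N\choose m}=(-1)^m{N\choose m}$. Reading off $m=0,1,2,\dots$ then yields the displayed expansion $x^{N'}-Nx^{N'-1}+{N\choose 2}x^{N'-2}-\cdots$, exactly the beginning of the $N$-th row of the Pascal triangle up to alternating signs.

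I expect the only real obstacle to be the bookkeeping of degrees, i.e. establishing monotonicity of $d_j$ and pinning down the gap $d_n-d_{n-1}=q^{n-1}-1$; this is the analogue of the comparison $g(\lambda')-g(\lambda'')=n-2$ in Corollary~\ref{concl2} and is what certifies that no lower-order summand can perturb the top $k$ coefficients. The remaining steps — the binomial expansion and tracking the overall sign $(-1)^N$ together with the $x^{-n}$ shift — are routine, the one place to be careful being the parity argument that collapses $(-1)^{2N-m}$ to $(-1)^m$.
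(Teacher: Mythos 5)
Your proposal is correct and follows essentially the same route as the paper: the paper likewise notes that the $j=n$ summand of the sum in Theorem~\ref{chiPGdual} is $(1-x)^{N}$ while the next-largest summand has degree $\frac{q^{n-1}-1}{q-1}+1=N-k$, so the top $k$ coefficients are read off from $(1-x)^{N}$ after the $(-1)^{N}/x^{n}$ normalization. (Your more careful bookkeeping in fact shows the last displayed coefficient should be $\binom{N}{k-1}$ rather than $\binom{N}{k}$ — consistent with the $\chi_{F_7^*}$ example, so this is an off-by-one in the corollary's displayed formula, not a flaw in your argument.)
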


For example, the matroid dual to the Fano matroid $F_7=PG(2,2)$ satisfies the equality
$$
\chi_{F_7^*}(x)=\mathbf{1} \,x^4-\mathbf{7} \,x^3+\mathbf{21} \,x^2-28 x+13.
$$

\section{The Tutte polynomial and the matroid variant of the Matiyasevich formula}
\subsection{Definition of the Tutte polynomial}
In this section, we prove that one can obtain formulas given in sections~1 and~2 by refining results obtained for the Tutte polynomial.
The author of this paper is most impressed by paper~\cite{tutte1967}, where Tutte generalizes Theorem~\ref{Ffullgraph}.
Moreover, in the mentioned paper, Tutte also obtains the exponential generating function for Tutte polynomials of complete graphs. It implies that one can get the flow polynomial of the complete graph by multiple differentiations (see Corollary~\ref{concl4} below).
The mathematical formula in this case is more compact than formula~\eqref{f1} in~Theorem~\ref{Ffullgraph} (one can also obtain the latter formula as a corollary of the Tutte decomposition of the Tutte polynomial of the complete graph~$K_n$ in all partitions~$\lambda$ of the number~$n$). Note that the direct calculation of the flow polynomial of a complete graph by means of differentiation (using a computer) requires slightly more time than its calculation by decomposition in all partitions (in essence, both techniques are equivalent, they differ only in computation aspects and intermediate results storing methods; the latter are not specified in the shorter procedure which implies the use of the computation technique mentioned in Corollary~\ref{concl4}).

See fundamentals of the theory of Tutte polynomials, in particular, in~\cite{welsh}, \cite{Bryl}, and in the book~\cite{tutte-book} by Tutte (in the case of graphic matroids).
Specific properties of characteristic polynomials that are of interest for us are described in classical papers~\cite{rota} and~\cite{zasl}.
The review~\cite{mexico} has appeared to be very useful for us as a reference book of formulas for various Tutte polynomials.

Recall that the \textit{Tutte polynomial} of the matroid $M=(E,r)$ obeys the formula
$$
T_M(x,y)=\sum_{A\subseteq E} (x-1)^{r(E)-r(A)}(y-1)^{|E|-r(A)}.
$$
By definition, $T_{M^*}(x,y)=T_M(y,x)$. In addition,
\begin{equation}
\label{eq:chifromT}
\chi_M(z)=(-1)^{r(E)} T_M(1-z,0),\qquad   \chi_{M^*}(z)=(-1)^{|E|-r(E)} T_M(0,1-z).
\end{equation}
In certain (exceptional) cases one can easily calculate the Tutte polynomial. For example, in the case of a uniform matroid $U_{m,n}$, it is evident that
$$
T_{U_{m,n}}(x,y)=\sum_{i=0}^{m-1} {n \choose i}(x-1)^{m-i} + {n \choose m} + \sum_{i=m+1}^{n} {n \choose i}(y-1)^{i-m}.
$$
In a general case, for any matroid with $(x,y)\in H_1$, where $H_1$ is the so-called critical hyperbola $(x-1)(y-1)=1$, the following formula is valid:
\begin{eqnarray}
T_M(x,y(x))&=&\sum_{A\subseteq E} (x-1)^{r(E)-r(A)} (x-1)^{r(A)-|A|}=
\sum_{A\subseteq E}\frac{(x-1)^{r(E)}}{(x-1)^{|A|}}=\nonumber \\
&=&(x-1)^{r(E)}\left(1+\frac{1}{(x-1)}\right)^{|E|}=x^{|E|}(x-1)^{r(E)-|E|}.\label{eq:Tx1x}
\end{eqnarray}

If the ground set $E$, $|E|=n$, consists of $i$ links and $j$ colinks (the base case), then $T_M(x,y)=x^j y^i$, $i+j=n$. In particular, in this case, the characteristic polynomial satisfies the correlation
\begin{equation}
\label{chi_Mcolink}
\chi_M(z)=z^n\quad\mbox{if $j=n$;}\qquad \chi_M(z)=0\quad\mbox{otherwise}.
\end{equation}

Assume that an edge~$e$ is neither a link nor a colink, and let $A=E-e$ (hereinafter we identify the set consisting of one edge and the edge itself). Denote the matroid $M|A$ as $M\setminus e$. Any matroid and any $e\in E$ different from a colink satisfy the formula
$T_M(x,y)=T_{M\setminus e}(x,y)+T_{M/e}(x,y)$ (the Tutte polynomial is the Tutte--Grothendieck invariant).

The latter correlation, together with the base case, uniquely defines $T_M(x,y)$ for any~$M$.
In the case of the characteristic polynomial, the recurrent correlation for it takes the form
\begin{equation}
\label{chi_Mrec}
\chi_M(z)=\chi_{M\setminus e}(z)-\chi_{M/e}(z).
\end{equation}

\subsection{Theorem~\ref{main} as a corollary of the convolution formula for the Tutte polynomial}
Correlations~\eqref{chi_Mcolink} and~\eqref{chi_Mrec} allow us to prove formula~\eqref{one} by induction with respect to the number of edges different from colinks.
Below we discuss a more natural proof of Theorem~\ref{main} which is based on the application of the convolution formula for Tutte polynomials. This important correlation for Tutte polynomials established by W.~Kook, V.~Reiner, and D.~Atanton allows several generalizations.

\begin{prop}[\cite{reiner3}] The following formula is valid:
\label{prop1}
\begin{equation}
\label{rein1}
T_M(x,y)=\sum_{A\subseteq E} T_{M|A} (0,y) T_{M/A} (x,0).
\end{equation}
\end{prop}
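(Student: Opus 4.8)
The plan is to prove the convolution formula~\eqref{rein1} by expanding both Tutte polynomials on the right-hand side via their defining rank-generating formula and showing that the resulting double sum collapses to the rank-generating expression for $T_M(x,y)$. First I would recall the definition applied to the minor matroids. For a fixed $A\subseteq E$, the restriction $M|A$ has ground set $A$ with rank function inherited from $M$, so that
\begin{equation*}
T_{M|A}(0,y)=\sum_{B\subseteq A}(-1)^{r(A)-r(B)}(y-1)^{|B|-r(B)},
\end{equation*}
where I have substituted $x=0$ (making $(x-1)^{r(A)-r(B)}=(-1)^{r(A)-r(B)}$) and used $r_{M|A}(B)=r(B)$. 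Similarly, the contraction $M/A$ has ground set $E-A$ with $r_{M/A}(C)=r(A\cup C)-r(A)$, so that
\begin{equation*}
T_{M/A}(x,0)=\sum_{C\subseteq E-A}(x-1)^{r(E)-r(A\cup C)}(-1)^{|C|-r(A\cup C)+r(A)},
\end{equation*}
using $r_{M/A}(E-A)=r(E)-r(A)$ for the $x$-exponent and $y=0$ for the sign factor.

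Next I would multiply these two expressions and sum over all $A$. The key structural observation is that a triple $(A,B,C)$ with $B\subseteq A$ and $C\subseteq E-A$ is equivalent to specifying two disjoint sets $B$ and $C$ together with the ``middle'' set $A-B$ drawn from $E-B-C$. The right-hand side of~\eqref{rein1} therefore becomes a sum over pairs of disjoint subsets $B,C$, weighted by $(x-1)^{r(E)-r(A\cup C)}(y-1)^{|B|-r(B)}$ times a sign, with an inner sum over the choice of $A-B$. The plan is to collect the exponents: combining the two rank terms and the two sign contributions, the combined sign on a term is $(-1)^{r(A)-r(B)+|C|-r(A\cup C)+r(A)}$, and I would simplify this using the parity identity $(-1)^{2r(A)}=1$.

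The main obstacle—and the crux of the argument—will be showing that the inner sum over the middle set vanishes unless $A\cup C$ is forced to equal $S:=B\cup C$ in the appropriate rank-theoretic sense, so that only the ``closed'' configurations survive. Concretely, I expect that after fixing $B$ and $C$, summing over $A-B\subseteq E-B-C$ produces an alternating sum whose value is governed by whether adding elements changes the rank; the submodularity and monotonicity axioms (properties~(2) and~(3) of the rank function) should force a cancellation that leaves exactly one surviving term per pair $(B,C)$ with $B$ flat-like in $M|(B\cup C)$ and $C$ spanning in the contraction. I would match the survivors against the single-sum definition
\begin{equation*}
T_M(x,y)=\sum_{S\subseteq E}(x-1)^{r(E)-r(S)}(y-1)^{|S|-r(S)},
\end{equation*}
by checking that each $S$ receives the coefficient $(x-1)^{r(E)-r(S)}(y-1)^{|S|-r(S)}$ with no residual sign. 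The delicate bookkeeping of the alternating inner sum is where all the work lies; once that cancellation is established, re-indexing by $S$ completes the proof. An alternative route, should the direct cancellation prove unwieldy, is to verify~\eqref{rein1} on the base case of links and colinks—where $T_{M|A}(0,y)$ and $T_{M/A}(x,0)$ reduce to monomials—and then invoke deletion–contraction $T_M=T_{M\setminus e}+T_{M/e}$ together with the compatible behavior of restriction and contraction under minors, proceeding by induction on $|E|$.
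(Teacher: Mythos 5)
The paper itself gives no proof of Proposition~\ref{prop1}: it is quoted verbatim from Kook--Reiner--Stanton \cite{reiner3}, so your attempt can only be judged on its own merits. Your setup is correct --- the expansions of $T_{M|A}(0,y)$ and $T_{M/A}(x,0)$, the resulting triple sum over $(A,B,C)$ with $B\subseteq A$, $C\subseteq E-A$, and the combined sign $(-1)^{r(A)-r(B)+|C|-r(A\cup C)+r(A)}$ are all right. But the step you defer as ``the crux'' is exactly where your plan, as organized, would fail. You propose to fix $(B,C)$ and let the alternating inner sum run over the middle set $A-B\subseteq E-B-C$, expecting submodularity to force a cancellation leaving ``closed'' configurations. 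There is no such cancellation: for fixed $(B,C)$ the summand depends on $A$ only through $r(A\cup C)$ and carries \emph{no} sign alternating in $|A-B|$, so that inner sum does not vanish and has no clean closed form. Submodularity plays no role anywhere.

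The correct regrouping is different. Substitute $D=A\cup C$ (a bijection $A\leftrightarrow D$ since $A\cap C=\emptyset$), so the sign becomes $(-1)^{r(B)+|C|+r(D)}$ and the $(x-1)$-factor depends only on $D$; then for fixed $B\subseteq D$ sum over $C\subseteq D-B$ \emph{last}. That sum is $\sum_{C\subseteq D-B}(-1)^{|C|}$, which vanishes unless $D=B$, i.e.\ unless $C=\emptyset$ and $A=B$. The two copies of $(-1)^{r(B)}$ then cancel and each surviving term is $(x-1)^{r(E)-r(A)}(y-1)^{|A|-r(A)}$, summing to $T_M(x,y)$. So the collapse is pure inclusion--exclusion, the surviving configurations are $B=A$, $C=\emptyset$ (not a flat/spanning condition), and the delicate rank-theoretic bookkeeping you anticipate is not needed. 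Your fallback route --- verifying the base case of links and colinks and inducting via $T_M=T_{M\setminus e}+T_{M/e}$, using the commutation of deletion and contraction with minors --- is sound and would also complete the proof, but as written neither route is carried out, so the argument has a genuine gap at its central step.
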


For example, for uniform matroids, the following remark is valid.

\begin{note}
\label{concl3}
Let $n\in\mathbb N$, $m\in \mathbb N_0$, $m\leq n$. For the matroid $M=U_{m,n}$ identity~\eqref{rein1} takes the form
$$
T_M(x,y)=T_{M} (x,0) + T_{M} (0,y).
$$
\end{note}

\begin{proof}
Let us use formula~\eqref{umnk} and the fact that the sign alternating sum of binomial coefficients equals zero. With $k\in\mathbb N$ we get equalities $$T_{U_{k,k}}(0,y)=0;\qquad T_{U_{k,0}}(x,0)=0. $$
Therefore, with
$0<|A|\leq m$ we get the equality $T_{U_{m,n}|A}(0,y)=0$, and with $m \leq |A|<n$ we conclude that $T_{U_{m,n}/A}(x,0)=0$. Therefore, among all terms in the sum in the right-hand side of the equality \eqref{rein1}, only those ones that correspond to cases $A=\emptyset$ and $A=E$ differ from zero.
\end{proof}

Certainly, one can easily immediately verify the formula in Remark~\ref{concl3} without referring to Proposition~\ref{prop1}.

We need one more notion which is closely connected with the Tutte polynomial.
Recall that the \textit{Whitney rank generating function} $R_M(u,v)$ of the matroid $M=(E,r)$ obeys the formula
$$
R_M(u,v)=\sum_{A\subseteq E}u^{r(E)-r(A)}v^{|A|-r(A)},
$$
i.\,e.,
$R_{M}(u-1,v-1)=T_{M}(u,v)$. Evidently,
\begin{equation}
\label{chifromR}
\chi_{M}(x)=(-1)^{r(E)}R_{M}(-x,-1),\quad \chi_{M^*}(x)=(-1)^{|E|-r(E)}R_{M}(-1,-x).
\end{equation}
We can rewrite formula~\eqref{eq:Tx1x} as follows:
\begin{equation}
\label{last2}
R_M(x,1/x)=(x+1)^{|E|}x^{r(E)-|E|}.
\end{equation}

In the paper~\cite{reiner2}, V.~Reiner generalizes equality~\eqref{rein1}
to a certain two-parameter family of equalities, where formula~\eqref{rein1} is a limit correlation.
Finally, in~\cite{kung2} J.~P.~S.~Kung obtains several identities
which generalize identities established earlier and the convolution formula~\eqref{rein1} in particular.
Identity~3 in this paper takes the form
\begin{equation}
\label{eq:kung2}
R_M(\lambda\xi,xy)=
\sum_{A\subseteq E} \lambda^{r(E)-r(A)}(-y)^{|A|-r(A)}R_{M|A}(-\lambda,-x)R_{M/A}(\xi,y)
\end{equation}
(we use denotations introduced by J.~P.~S.~Kung).
Now we represent formulas~\eqref{one} and~\eqref{two} as particular cases of identity~\eqref{eq:kung2}.

\begin{proof}[Proof of Theorem~\ref{main} as a corollary of equality~\eqref{eq:kung2}]
Putting $xy=-q$ and $\lambda\xi=-1$, in accordance with~\eqref{chifromR}
in the left-hand side of equality~\eqref{eq:kung2} we obtain the desired expression
$(-1)^{|E|-r(E)}\chi_{M^*}(q)$.
For establishing~\eqref{one} it suffices to concretize $x=1$, $\lambda=q$ and, correspondingly,
$y=-q$, $\xi=-1/q$; then to perform substitutions~\eqref{chifromR} and~\eqref{last2}, and sum degrees of~$q$ and~$(-1)$. Formula~\eqref{two} can be obtained analogously, provided that $y=-1$, $\xi=-q$ and, correspondingly, $x=q$ and $\lambda=1/q$.
\end{proof}

Note that Theorem~\ref{main} without referring to Matiyasevich (the author of this paper was not aware of his works) was first stated by us in preprint~\cite{lerner2016} (see theorem~2 therein).
The proof described above is adduced in the mentioned preprint.

\subsection{Explicit formulas for the Tutte polynomials of some matroids and theorems of Section 2}
All theorems in Section~2 are more elementary corollaries of results on Tutte polynomials which were obtained earlier.
We deduce Theorem~\ref{chiPGdual} by substituting values $u=0$ and $v=1-x$ in the formula for the Tutte polynomial in the projective geometry
\begin{equation}
\label{TPG}
T_{PG(n-1,q)}(u,v) =\frac{1}{(u-1)^n}\sum_{k=0}^n {n \choose k}_q\ v^{\frac{q^k-1}{q-1}}\,\prod_{i=0}^{n-k-1}((u-1)(v-1)-q^i)
\end{equation}
and by multiplying it by $(-1)^{|E|-n}$ in accordance with correlation~\eqref{eq:chifromT}.
Expression~\eqref{TPG} is established in paper~\cite{MPHAKO}.

Another way to derive the same expression~\eqref{TPG} together with the exponential generating
function for Tutte polynomials of projective geometries over the field $\mathbb F_q$ is proposed in the paper~\cite{reinePGen}. Evidently, the value of the $n$th derivative of the exponential generating function at zero coincides with the $n$th term of the series.
Below we apply such an approach for deriving an explicit formula for the characteristic polynomial of matroid,
calculating the flow polynomial of the complete graph in a different way.
The technique used by us is based on results obtained by Tutte in~\cite{tutte1967}.

In the mentioned paper, Tutte does not immediately consider the flow polynomial, but he elegantly treats the polynomial of two variables $Q_G(u,v)$ which is closely connected with the chromatic polynomial $P_G(x)$.
One can express this dichromatic Tutte polynomial $Q_G(u,v)$ in terms of the Whitney rank function as follows:
$$
Q_G(u,v)=u^{c(G)} R_{M_G}(u,v);
$$
here $c(G)$ is the number of connectivity components of the graph~$G$.
For the dichromatic polynomial of a complete graph, Tutte has obtained an equation (\cite[equation~(16)]{tutte1967}), which in terms used in Section~2 takes the form
$$
Q_{K_n}(u,v)=\frac{1}{v^n} \sum_{\lambda} f(\lambda)
 (1+v)^{s(\lambda)} uv(uv-1)\ldots(uv-\ell(\lambda)+1).
$$
By using the second formula in~\eqref{chifromR} we obtain the statement of Theorem~\ref{Ffullgraph} as a particular case of the above correlation.

The previous correlation was strengthened by Tutte as the following formula for the exponential generating function for $Q_{K_n}(u,v) v^n$ (\cite[equation~(17)]{tutte1967}):
\begin{equation}
\label{tutgen}
\sum_{n=0}^{\infty} \frac{z^n v^n}{n!} Q_{K_n}(u,v)=\Bigl\{ \sum_{n=0}^{\infty} \frac{z^n}{n!} (1+v)^{\frac12 n(n-1)}\Bigr\}^{uv}.
\end{equation}

\begin{concl}[Corollary of the Tutte formula~\eqref{tutgen}]
The flow polynomial of a complete graph obeys the formula
\label{concl4}
\begin{equation}
\label{tutFlowDif}
F_{K_n}(x)=\frac{(-1)^{\frac12 n(n-1)}}{x^n}\left. \left( \frac{\partial^n}{\partial z^n} \Bigl\{\sum_{i=0}^n \frac{z^i}{i!}(1-x)^{\frac12 i(i-1)}\Bigr\}^x\,\right)\right|_{z=0}.
\end{equation}
\end{concl}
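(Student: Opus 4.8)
The plan is to read off the coefficient of $z^n$ in the exponential generating function~\eqref{tutgen} as an $n$th derivative at $z=0$, and then to specialize the two Tutte variables to the single ``flow'' variable $x$. First I would recall that for the complete graph $c(K_n)=1$, so the identity $Q_G(u,v)=u^{c(G)}R_{M_G}(u,v)$ gives $R_{M_{K_n}}(u,v)=Q_{K_n}(u,v)/u$. Combining this with the second relation in~\eqref{chifromR}, namely $\chi_{M^*}(x)=(-1)^{|E|-r(E)}R_M(-1,-x)$, together with the fact that $F_{K_n}=\chi_{M_{K_n}^*}$, yields $F_{K_n}(x)$ as an explicit scalar multiple of $Q_{K_n}(-1,-x)$; concretely $F_{K_n}(x)=(-1)^{|E|-r(E)+1}Q_{K_n}(-1,-x)$, the extra $+1$ in the exponent coming from dividing by $u=-1$.

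The second step is to extract $Q_{K_n}(-1,-x)$ from~\eqref{tutgen}. Since the left-hand side of~\eqref{tutgen} is $\sum_n \frac{z^n}{n!}\bigl(Q_{K_n}(u,v)\,v^n\bigr)$, the quantity $Q_{K_n}(u,v)\,v^n$ equals $n!\,[z^n]$ of the right-hand side, i.e. $\left(\frac{\partial^n}{\partial z^n}\bigl\{\cdots\bigr\}^{uv}\right)\big|_{z=0}$. Now I would put $u=-1$ and $v=-x$; then $uv=x$, $1+v=1-x$, and $v^n=(-x)^n$, so the base series becomes exactly $\sum_i \frac{z^i}{i!}(1-x)^{\frac12 i(i-1)}$ raised to the power $x$. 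A small but necessary observation is that the $n$th derivative at $z=0$ depends only on the Taylor coefficients of the base series up to order $z^n$ (its constant term is $1$, so the power of the series is a well-defined formal object), whence the infinite inner sum may be truncated to $\sum_{i=0}^{n}$ as written in~\eqref{tutFlowDif}.

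The remaining work, and the only genuinely delicate point, is the sign bookkeeping. Assembling the prefactors one finds three contributions: the factor $(-1)^{|E|-r(E)}=(-1)^{(n-1)(n-2)/2}$ from~\eqref{chifromR} with $|E|=\binom{n}{2}$ and $r(E)=n-1$; the factor $-1$ coming from dividing by $u=-1$; and the factor $(-1)^n$ coming from $1/v^n=1/(-x)^n$. I would then verify the parity identity
\[
\frac{(n-1)(n-2)}{2}+1+n\equiv \frac{n(n-1)}{2}\pmod 2,
\]
which holds because the two exponents differ by exactly $2$. This collapses all signs into the single factor $(-1)^{n(n-1)/2}$ of~\eqref{tutFlowDif}, and the accompanying $1/x^n$ produces the asserted formula. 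I expect this parity reconciliation to be the main obstacle, since every other step is a direct substitution into identities already established in the excerpt.
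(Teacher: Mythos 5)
Your proposal is correct and follows essentially the same route as the paper: specialize the Tutte generating function~\eqref{tutgen} at $u=-1$, $v=-x$ via~\eqref{chifromR}, then extract the coefficient of $z^n$ by differentiating at $z=0$. The paper merely performs the substitution on the whole generating function at once (absorbing the signs into the factor $(-1)^{\frac12 i(i-1)}x^i$ on the left-hand side), whereas you carry out the same sign bookkeeping term by term; your parity check $\frac{(n-1)(n-2)}{2}+1+n\equiv\frac{n(n-1)}{2}\pmod 2$ is exactly the identity implicit in the paper's displayed series.
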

\begin{proof}
Substituting~\eqref{chifromR} in formula~\eqref{tutgen}, we get the correlation
$$
\sum_{i=0}^{\infty} \frac{z^i x^i (-1)^{\frac12 i(i-1)}}{i!} F_{K_i}(x)=\Bigl\{ \sum_{i=0}^{\infty} \frac{z^i}{i!} (1-x)^{\frac12 i(i-1)}\Bigr\}^x.
$$
By applying the operator $\left. \left( \frac{\partial^n}{\partial z^n}\, \cdot \, \right)\right|_{z=0}$ to both sides of the latter equality, we obtain formula~\eqref{tutFlowDif}.
\end{proof}

\subsection{Reason for binomiality of the major coefficients of characteristic polynomials}
As we have mentioned, the major coefficients of characteristic polynomials of all matroids $M^*_{K_n}$, $U_{m,n}$, and $PG(n,q)^*$ considered in Section~2 coincide (accurate to alternating signs)
with the beginning of the row of the Pascal triangle, whose number equals the quantity of elements in the corresponding matroid. We have established this property as a corollary of the matroid form of the Matiyasevich formula. The classical assertion about combinatorial sense of coefficients $\chi_M$ stated by G.~C.~Rota~\cite{rota} reveals a deeper reason for the result mentioned above. Namely, the reason is that the complete graph $K_n$, the projective geometry $PG(n,q)$, and especially the uniform matroid have ``rather thick cuts'' (cocycles). Let us discuss this property.

Recall that the set~$A$: $A\subseteq E$ of the matroid $M=(E,r)$ is said to be \textit{independent}, if $r(A)=|A|$. If, in addition, $r(A)=r(E)$, then the independent set is called a \textit{base}.
The minimal set, which is not independent, is called a \textit{cycle} of the matroid~$M$.
A cycle of the matroid $M^*$ is called a \textit{cocycle}.
Let us associate all elements $e$, $e\in E$, with distinct positive integers $l_e$.
A cycle~$C$ without an element $e'$ such that $l_{e'}=\max_{e\in C} l_e$ is called a \textit{broken circuit}.
G.~C.~Rota has obtained the following result (\cite[P.~359, Corollary~1]{rota}):

\begin{prop}
\label{prop2}
Assume that the characteristic polynomial of the matroid $M=(E,r)$ takes the form
$$\chi_M(x)=x^{r(E)}+m_1 x^{r(E)-1}+m_2 x^{r(E)-2}+\cdots+m_{r(E)}.$$
Then $(-1)^k m_k$ is the number of independent subsets~$E$ consisting of $k$ elements without broken circuits.
\end{prop}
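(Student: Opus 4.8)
This is the classical Whitney--Rota \emph{broken circuit theorem}, and my plan is to reduce the whole statement to the single generating identity
\begin{equation*}
\chi_M(x)=\sum_{B}(-1)^{|B|}x^{r(E)-|B|}, \tag{$\ast$}
\end{equation*}
where $B$ runs over all subsets of $E$ containing no broken circuit (the ``NBC sets''). The first step is the elementary observation that an NBC set is automatically independent: if $B$ were dependent it would contain a cycle $C$, hence the broken circuit $C-e'$ with $l_{e'}=\max_{e\in C}l_e$, contradicting that $B$ is NBC. Thus every NBC set $B$ satisfies $r(B)=|B|$, so the sum in~$(\ast)$ is graded by cardinality; matching the coefficient of $x^{r(E)-k}$ on the two sides of~$(\ast)$ then gives exactly $(-1)^k m_k=\#\{k\text{-element NBC sets}\}$, which is the claim (the word ``independent'' in the statement being redundant once NBC is imposed). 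So everything reduces to proving~$(\ast)$.

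I would prove $(\ast)$ by induction on $|E|$ via deletion--contraction. The base case $E=\emptyset$ is immediate, as $\chi_M=1$ and the only NBC set is $\emptyset$. For the inductive step I fix the element $e$ with the \emph{smallest} label $l_e$; choosing the minimum is precisely what makes the deletion side clean below. If $e$ is a link then $\chi_M\equiv 0$ by~\eqref{eq:chiMwithLoops}, and on the other side $\{e\}$ is a cycle whose broken circuit is $\emptyset$, so no set is NBC and the right-hand side of~$(\ast)$ vanishes too. If $e$ is a colink then a direct computation from~\eqref{eq:chiM} gives $\chi_M(x)=(x-1)\chi_{M\setminus e}(x)$; since a colink lies in no cycle, the NBC sets of $M$ are exactly $B'$ and $B'\cup e$ as $B'$ ranges over the NBC sets of $M\setminus e$, which reproduces the same factor $(x-1)$, and the inductive hypothesis closes this case.

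The substantive case is when $e$ is neither a link nor a colink, so the recursion~\eqref{chi_Mrec}, namely $\chi_M=\chi_{M\setminus e}-\chi_{M/e}$, applies. Here I split the NBC sets of $M$ according to whether they contain $e$. Because $e$ has the minimal label, every cycle through $e$ of size at least two has its maximal-label element different from $e$, so its broken circuit contains $e$; consequently the broken circuits of $M$ avoiding $e$ are exactly the broken circuits of $M\setminus e$. This yields that the NBC sets of $M$ not containing $e$ coincide with the NBC sets of $M\setminus e$ and contribute $\chi_{M\setminus e}(x)$ by induction (the exponents match since $r_{M\setminus e}(E-e)=r(E)$). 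The sets containing $e$ should, under $B\mapsto B-e$, correspond to the NBC sets of the contraction $M/e$ and contribute $-\chi_{M/e}(x)$, both the sign and the shift of exponent coming from $r_{M/e}(E-e)=r(E)-1$ and $|B-e|=|B|-1$. Summing the two contributions reconstructs~\eqref{chi_Mrec}.

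The main obstacle I anticipate is exactly this last bijection on the contraction side: one must verify that a set $B\ni e$ is NBC in $M$ if and only if $B-e$ is NBC in $M/e$. This requires controlling how the cycles of $M$, and the maximal-label element of each, descend to the cycles of $M/e$ (recall a cycle of $M/e$ is a minimal nonempty set of the form $C$ or $C-e$ with $C$ a cycle of $M$), and then checking that the minimum-label convention keeps these maxima, and hence the broken circuits, compatible under contraction. The deletion side, the link case, and the colink case are by comparison routine. Should the contraction bookkeeping become unwieldy, the fallback I would keep in reserve is to establish~$(\ast)$ directly by exhibiting a sign-reversing, rank-preserving involution on the subsets that are \emph{not} NBC, leaving the NBC sets as the only fixed points; this bypasses deletion--contraction but trades the circuit bookkeeping for the careful design of the involution.
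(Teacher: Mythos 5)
The paper offers no proof of this proposition: it is quoted from Rota \cite{rota}, whose own argument runs through the M\"obius function of the lattice of flats and a closure/cross-cut computation. Your deletion--contraction derivation of the NBC expansion $\chi_M(x)=\sum_B(-1)^{|B|}x^{r(E)-|B|}$ is therefore a genuinely different and more elementary route (essentially Whitney's theorem in its matroid form, i.e.\ Brylawski's broken-circuit-complex recursion), and it is correct. Choosing $e$ of \emph{minimal} label is indeed the right convention to pair with ``broken circuit $=$ circuit minus its maximal element''; your link, colink and deletion cases are all fine. The contraction bijection you flag as the main obstacle does go through, and with less bookkeeping than you fear: you may assume $B$ is independent in $M$, since a dependent $B$ makes $B-e$ dependent in $M/e$ and a dependent set automatically contains a broken circuit; then, given a broken circuit $C-\max(C)\subseteq B$ with $e\in B$, either $e\in C$, in which case $C-e$ is a circuit of $M/e$ with the same maximal element and $C-e-\max(C)\subseteq B-e$ is the required broken circuit of $M/e$, or $e\notin C$, in which case $e\notin\mathrm{cl}(C)=\mathrm{cl}(C-\max(C))$ (otherwise $(C-\max(C))\cup\{e\}\subseteq B$ would be dependent), so $C$ survives as a circuit of $M/e$ with the same broken circuit. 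The converse direction uses only that $e$, having the globally minimal label, is never the maximal element of a circuit of size at least two, so broken circuits of $M$ through $e$ always contain $e$. As for what each approach buys: Rota's proof shows the coefficients depend only on the lattice of flats, while yours stays entirely at the level of the rank function, circuits and the recursion $\chi_M=\chi_{M\setminus e}-\chi_{M/e}$ already used in Section~3 of the paper, and your fallback (the sign-reversing, rank-preserving involution on non-NBC subsets of the defining sum for $\chi_M$) is likewise a standard and viable alternative.
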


As a result, we obtain the following assertion. If any cocycle of the matroid $M$ contains at least $t$ elements, then for any $k\leq t-2$ the absolute value $(-1)^k m'_k$ of the $k$th coefficient of the characteristic polynomial $\chi_{M^*}$
$$
\chi_{M^*}(x)=x^{r^*(E)}+m'_1 x^{r^*(E)-1}+\cdots+m'_{r^*(E)} \quad \mbox{(here $r^*(E)=|E|-r(E)$)}
$$
coincides with ${|E| \choose k}$. Really, in this case, any subset consisting of $k$ elements of the set $|E|$ is necessarily independent in the matroid~$M^*$ and contains no broken circuit.

Since any cut in the complete graph $K_n$ consists of at least ${n-1}$ edges, the mentioned property implies Corollary~\ref{concl2}.

We can use the same property for finding coefficients of the characteristic polynomial of a uniform matroid or the dual one. Recall that the latter also is a uniform matroid.

Really, any cycle of a uniform matroid $U_{m,n}$, by definition, contains $m+1$ elements, and the power of its characteristic polynomial equals~$m$. Consequently, all coefficients of the characteristic polynomial, except, possibly, the free term, coincide (accurate to alternating signs) with the beginning of the $n$th row in the Pascal triangle.

Finally, for describing cocycles in $PG(n-1,q)$ we can make use of the fact that a cocycle is the minimal set of elements which has at least one common element with any base.
Hence, one can easily understand cocycles of the matroid $PG(n-1,q)$ geometrically. Namely, they represent sets of points in the projective space which belong to some hyperplane ${\mathbb F}_q^n$ (which does not contain the origin of coordinates). The number of points in any hyperplane ${\mathbb F}_q^n$ equals~$q^{n-1}$. Hence we get Corollary~\ref{concl5}.

\section{Conclusion}
In this paper, we propose a matroid form of one variant of the Matiyasevich formula and
describe its application for deriving flow polynomials
and characteristic polynomials of matroids dual to matroids
of finite projective geometries.
One can deduce most results from correlations for the corresponding Tutte polynomials. However, the use of one variable in the case, when two of them are not necessary, seems to give an easier way to obtain such formulas. On the other hand, this way allows us to better understand (in a sense) the interconnection of two variables in the Tutte polynomial. It also allows us to clarify the structure of major powers of considered polynomials.

In what follows, we are going to consider ``conditional'' flow and chromatic polynomials and their generalizations. In the simplest case, we are interested in the number of regular colorings of the graph, when colors of some vertices are fixed a~priori. For calculating this number, similarly to the conditional probability formula, we can write down the ratio of two chromatic polynomials, which explains the terminology used by us. It makes sense to consider the matroid variant of such correlations.

A month after the preprint of the first version of this paper was uploaded to arXiv.org, there has appeared preprint~\cite{ryo} by R.~Takashi, where he, in particular, develops ideas of this paper and gives more references. In~\cite[formula (3.2)]{reed}, R.~C.~Read and E.~G.~Whitehead re-derive the Matiyasevich formula. 
Moreover, in the same paper, they obtain an analog of this formula, which expresses a flow polynomial in terms of the sum of chromatic polynomials of factor graphs~\cite[formula (7.8)]{reed}. 
In~\cite{lob}, we prove the mentioned formula with the help of the Fourier transformations method (which is standard for Feynman amplitudes); in the Matiyasevich paper, this method is described regardless of Feynman amplitudes.
R.~C.~Read and E.~G.~Whitehead propose a more elementary proof; previously, in Section~1.2, we describe its simplified variant for the matroid case. 
As applied to flow and chromatic polynomials, the indicated formula has 4 variants (we have mentioned two of them earlier; in one more variant, the flow polynomial is represented as a linear combination of chromatic polynomials of subgraphs; the fourth variant of the formula expresses the chromatic polynomial in terms of flow polynomials of factor graphs). As we have mentioned earlier, the first formula among two ones indicated in braces is formula~(2.5) obtained in paper~\cite{kazakov}. R.~Takashi also gives all these formulas and their elementary proofs. Moreover, following~\cite{kazakov} (see also~\cite{sokal}), R.~Takashi for each formula considers its variant for a case, when each edge has a real-valued parameter, 
and generalizes these formulas for the matroid case. One can interpret this multiparameter variant as the partition function of multivariate Potts model .

Note that the multiparameter variants of these formulas, 
evidently, were obtained by D.~R.~Woodall in~\cite[Section 3]{woodall} with reference to Alan~Sokal. In addition, in the same paper, he also develops ideas of R.~C.~Read and E.~G.~Whitehead 
and gives detailed comments on them. In particular, regarding formulas~\eqref{one} and~\eqref{two} in the case of a graphic matroid and a bond one~\cite[formulas (1.1)--(1.4)]{woodall} 
(these formulas establish a connection between the flow polynomial~$F$ and the normalized chromatic polynomial denoted by the symbol~$N$), D.~R.~Woodall concludes that 
``The existence of expansions of this type is not surprising, but the interchange of $N$ and $F$ is intriguing and merits further study.'' 
One can consider this work as uncovering of this intrigue.

\subsubsection*{Acknowledgements} The research was supported by RSF (project No. 24-21-00158).

\end{document}